\DeclareMathOperator{\Aut}{Aut}
\DeclareMathOperator{\GKdim}{GKdim}
\DeclareMathOperator{\kk}{\Bbbk}
\DeclareMathOperator{\N}{\mathbb{N}}
\DeclareMathOperator{\Z}{\mathbb{Z}}
\numberwithin{equation}{section}
\theoremstyle{definition}
\newtheorem{thm}{Theorem}[section]
\newtheorem{prop}[thm]{Proposition}
\newtheorem{lem}[thm]{Lemma}
\newtheorem{conj}[thm]{Conjecture}
\newtheorem{defn}[thm]{Definition}
\begin{document}

\title{Some invariant subalgebras %of ($-1$)-skew polynomial rings 
are graded isolated singularities}
%\title{A proof of the Chan-Young-Zhang conjecture}

%\author{Quanshui Wu}
%\address{School of Mathematical Sciences, Fudan University, Shanghai 200433, China}
%\email{qswu@fudan.edu.cn}

\author{Ruipeng Zhu}
\address{Department of Mathematics, Southern University of Science and Technology, Shenzhen, Guangdong 518055, China}
\email{zhurp@sustech.edu.cn}

\begin{abstract}
	In this note, we prove that the invariant subalgebra of the skew polynomial algebra $\kk \langle x_0, x_1, \cdots, x_{n-1} \rangle / (\{x_ix_j+x_jx_i \mid i \neq j\})$ under the action $x_i \mapsto x_{i+1}\,(i \in \Z/{n\Z})$ is a graded isolated singularity, and thus a conjecture of Chan-Young-Zhang is true.
\end{abstract}
\subjclass[2020]{
	%11R29 %Class numbers, class groups, discriminants
	%12W22 %Actions of groups and semigroups; invariant theory (associative rings and algebras)
	%14C22  %(1980-now) Picard groups
	%16D20  %(1991-now) Bimodules
	%16D90 %(1991-now) Module categories [See also 16Gxx, 16S90]; module theory in a category-theoretic context; Morita equivalence and duality
	%16E65 %(2000-now) Homological conditions on associative rings (generalizations of regular, Gorenstein, Cohen-Macaulay rings, etc.)
	%16H05 %(1991-now) Separable algebras (e.g., quaternion algebras, Azumaya algebras, etc.)
	%16G30 %Representations of orders, lattices, algebras over commutative rings
	16S35 %(1991-now) Twisted and skew group rings, crossed products
	16S38 %(2000-now) Rings arising from noncommutative algebraic geometry [See also 14A22]
	16W22 %(2000-now) Actions of groups and semigroups; invariant theory (associative rings and algebras)
}

\keywords{Graded isolated singularity, group action, pertinency, Gelfand-Kirillov dimension}

\thanks{}

\maketitle

%\titlecontents{section}[0pt]{\addvspace{2pt}\filright}
%{\contentspush{\thecontentslabel\ }}
%{}{\titlerule*[8pt]{.}\contentspage}

%\tableofcontents
%\setlength{\baselineskip}{1.4em}
%\textwidth=125mm
%\textheight=195mm

\section{Introduction}
%To study noncommutative projective geometry
Noncommutative graded isolated singularities are defined by Ueyama \cite[Definition 2.2]{Uey2013}.
A noetherian connected graded algebra $B$ is called a {\it graded isolated singularity} if the associated noncommutative projective scheme $\mathrm{Proj} (B)$ (in the sense of \cite{AZ1994}) has finite global dimension. See \cite{CKWZ2018, BHZ2018, GKMW2019, CKZ2020} for some examples of graded isolated singularities.

Let $A$ be a noetherian Artin-Schelter regular algebra and $G$ be a finite subgroup of $\Aut_{\mathrm{gr}}(A)$. To prove  a version of the noncommutative Auslander theorem, an invariant called the {\it pertinency} of the $G$-action on $A$  is introduced in \cite{BHZ2018} and \cite{BHZ2019}. We recall it here.

The {\it pertinency} of the $G$-action on $A$ \cite[Definition 0.1]{BHZ2019} is defined to be
$$\mathbf{p}(A, G):= \GKdim(A) - \GKdim(A\#G/(e_0)),$$
where $(e_0)$ is the ideal of the skew group algebra $A\#G$ generated by $e_0 := 1 \# \frac{1}{|G|} \sum_{g \in G} g$.

Then, by \cite[Theorem 3.10]{MU2016}, $A^G$ is a graded isolated singularity if and only if $\mathbf{p}(A, G) = \GKdim(A)$. Unlike in the commutative cases, it is difficult to determine when the invariant subalgebra is a graded isolated singularity. 

Let $\kk$ be an  algebraically closed field of characteristic zero. %All objects are $\kk$-linear.
Let $A = \kk_{-1}[x_0, \dots, x_{n-1}] \,(n \geqslant 2)$  be the ($-1$)-skew polynomial algebra, which is generated by $\{x_0, \dots, x_{n-1}\}$ and subject to the relations
$$x_ix_j = (-1) x_jx_i\,(\forall i \neq j).$$
Let $G:=C_n$ be the cyclic group of order $n$ acting on $A$ by permuting the generators of the algebra cyclically; namely, $C_n$ is generated by $\sigma = (0 \, 1 \, 2 \, \cdots \, n-1)$ of order $n$ that acts on the generators by
$$\sigma x_i = x_{i+1}, \; \forall \, i \in \Z_n:= \Z/n\Z.$$

In \cite[Theorem 0.4]{CYZ2020}, Chan, Young and Zhang prove the following result on graded isolated singularities.

\begin{thm}\label{CYZ-thm}
	If either $3$ or $5$ divides $n$, then $\mathbf{p}(A, G) < \GKdim A = n$. Consequently, the invariant subalgebra $A^G$ is not a graded isolated singularity.
\end{thm}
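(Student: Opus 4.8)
The plan is to establish $\GKdim\bigl(A\#G/(e_0)\bigr)\ge 1$. Since by definition $\mathbf{p}(A,G)=\GKdim A-\GKdim\bigl(A\#G/(e_0)\bigr)$ and $\GKdim A=n$, this forces $\mathbf{p}(A,G)\le n-1<n=\GKdim A$, hence $\mathbf{p}(A,G)\ne\GKdim A$, and $A^G$ is not a graded isolated singularity by \cite[Theorem 3.10]{MU2016}. Because $A\#G/(e_0)$ is a finitely generated graded algebra with finite-dimensional degree-zero component, $\GKdim\ge 1$ is equivalent to its being infinite dimensional over $\kk$, so it is enough to produce infinitely many degrees in which $A\#G/(e_0)$ is nonzero.

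The first move is to pass to the offending cyclic subgroup. Suppose $3\mid n$ and set $H:=\langle\sigma^{n/3}\rangle\cong C_3$ (if instead $5\mid n$, take $H:=\langle\sigma^{n/5}\rangle\cong C_5$); write $e_0=e_0^{G}$ and let $e_0^{H}$ be the averaging idempotent of $\kk H$. From $e_0^{H}e_0^{G}=e_0^{G}$ one gets $(e_0^{G})\subseteq(e_0^{H})$ as ideals of $A\#G$, so $A\#G/(e_0^{H})$ is a quotient of $A\#G/(e_0^{G})$; and since $H$ is normal in $G$ one has $\sigma^{k}e_0^{H}\sigma^{-k}=e_0^{H}$, so a coset computation shows that $A\#G/(e_0^{H})$ is, as a graded vector space, a direct sum of $[G:H]$ copies of $A\#H/(e_0^{H})$. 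Hence $\GKdim\bigl(A\#G/(e_0^{G})\bigr)\ge\GKdim\bigl(A\#G/(e_0^{H})\bigr)=\GKdim\bigl(A\#H/(e_0^{H})\bigr)$, i.e.\ $\mathbf{p}(A,G)\le\mathbf{p}(A,H)$. It therefore suffices to show that $A\#H/(e_0^{H})$ is infinite dimensional, where $H\cong C_m$ ($m=3$ or $5$) acts on $A=\kk_{-1}[x_0,\dots,x_{n-1}]$ by $x_i\mapsto x_{i+n/m}$.

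Next I would decompose $A\#H/(e_0^{H})$ using the primitive idempotents of $\kk C_m$. Fix a primitive $m$-th root of unity $\omega$, let $\rho:=\sigma^{n/m}$ be the generator of $H$, and put $A_{[\ell]}:=\{a\in A:\rho\cdot a=\omega^{\ell}a\}$, so $A=\bigoplus_{\ell\in\Z_m}A_{[\ell]}$ with $A_{[0]}=A^{H}$. The idempotent decomposition identifies $A\#H/(e_0^{H})$ block by block and yields
$$\GKdim\bigl(A\#H/(e_0^{H})\bigr)=\max_{i,j\in\Z_m}\GKdim\bigl(A_{[i-j]}\big/A_{[i]}A_{[-j]}\bigr),$$
where $A_{[i]}A_{[-j]}$ denotes the image of the multiplication map $A_{[i]}\otimes A_{[-j]}\to A_{[i-j]}$; so it is enough to find one pair $(i,j)$ with $A_{[i-j]}/A_{[i]}A_{[-j]}$ infinite dimensional. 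I would take a diagonal block $i=j=1$, whose quotient is $A^{H}/I$ with $I:=A_{[1]}A_{[m-1]}$, a two-sided graded ideal of $A^{H}$. The structural inputs are: for each $\rho$-orbit $O=\{i_0,i_1,\dots,i_{m-1}\}$ (with $\rho\cdot x_{i_k}=x_{i_{k+1}}$) and each $r$, the element $\sum_{k}\omega^{-rk}x_{i_k}$ is an $\omega^{r}$-eigenvector whose square $\sum_{k}\omega^{-2rk}x_{i_k}^{2}$ is central in $A$ (all cross terms $x_ix_j+x_jx_i$ vanish), together with the identity $1+\omega+\cdots+\omega^{m-1}=0$. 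Using these I would show that $v_0:=\sum_i x_i\in A^{H}$ and that $v_0^{d}\notin I$ for every $d$, so that $A^{H}/I$ surjects onto the polynomial ring $\kk[v_0]$ and $\GKdim(A^{H}/I)\ge 1$. For $n=3$ (so $H=G=C_3$) this is a short hand computation, and in fact $A^{H}/I\cong\kk[v_0]$.

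The main obstacle is precisely the claim $v_0^{d}\notin A_{[1]}A_{[m-1]}$ for all $d$. Here $A_{[1]}A_{[m-1]}=\operatorname{im}\bigl(A_{[1]}\otimes A_{[m-1]}\to A^{H}\bigr)$ is a two-sided ideal of $A^{H}$ whose minimal generators a priori occur in unbounded degree, so no bounded degree-by-degree comparison can settle it; one must instead put products of eigenvector monomials into a normal form---a Gr\"obner-basis type analysis---and check that the relation $1+\omega+\cdots+\omega^{m-1}=0$ forbids the factor $v_0^{d}$ from ever being produced. That this genuinely depends on arithmetic is visible already from the failure for $m=2$, where $v_0^{2}$ coincides with the square of an eigenvector and hence lies in the ideal, and (by the expected value of $\mathbf{p}$) from the failure for primes $m\ge 7$ as well; so ``$3$ or $5$ divides $n$'' is being used in an essential, not merely formal, way. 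Executing this normal-form analysis uniformly in $n$, and identifying cleanly why $m=3,5$ are exactly the prime cases for which it succeeds, is where the substance of the proof lies.
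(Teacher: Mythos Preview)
This theorem is not proved in the present paper at all: it is quoted from \cite[Theorem~0.4]{CYZ2020} as background motivating the conjecture and the main result, and no argument for it is given here. There is thus no ``paper's own proof'' to compare your proposal against.

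Regarding the proposal on its own terms: the reduction from $G=C_n$ to the subgroup $H=\langle\sigma^{n/m}\rangle\cong C_m$ ($m\in\{3,5\}$) via $e_0^{H}e_0^{G}=e_0^{G}\Rightarrow(e_0^{G})\subseteq(e_0^{H})$ is correct, and the Peirce decomposition of $A\#H/(e_0^{H})$ into blocks of the shape $A_{[\alpha]}/A_{[\beta]}A_{[\gamma]}$ is the standard device. The problem is that what remains is precisely the theorem. You reduce everything to the assertion that $v_0^{d}\notin A_{[1]}A_{[m-1]}$ for all $d$, and then you do not prove it; you say only that it requires ``a Gr\"obner-basis type analysis'' and that ``executing this normal-form analysis uniformly in $n$ \dots\ is where the substance of the proof lies.'' That is an accurate diagnosis, but it means the proposal is an outline, not a proof. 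You yourself observe that the analogous claim fails for $m=2$ and must fail for primes $m\ge 7$ (else Theorem~\ref{main-thm} would be false), so the step you have left open is exactly the place where the hypothesis ``$3$ or $5$ divides $n$'' has to do real work; nothing in the proposal explains why $m\in\{3,5\}$ makes $v_0^{d}$ avoid $A_{[1]}A_{[m-1]}$.
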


Based on this theorem and \cite[Theorem 0.2]{CYZ2020}, Chan, Young and Zhang give the following conjecture \cite[Conjecture 0.5]{CYZ2020}.

\begin{conj}\label{conj}
	The invariant subalgebra $A^G$ is a graded isolated singularity if and only if $n$ is not divisible by $3$ or $5$.
\end{conj}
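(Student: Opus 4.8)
The forward implication of Conjecture~\ref{conj} is exactly Theorem~\ref{CYZ-thm}, so what remains is the converse: assuming $3\nmid n$ and $5\nmid n$, prove that $A^G$ is a graded isolated singularity. By \cite[Theorem 3.10]{MU2016} this is equivalent to $\mathbf{p}(A,G)=\GKdim A=n$, that is, to $\GKdim\bigl(A\#G/(e_0)\bigr)=0$; since $A\#G$ is a finitely generated, locally finite graded algebra, it is equivalent to show that $A\#G/(e_0)$ is \emph{finite dimensional} over $\kk$. The plan is to make $(e_0)$ completely explicit and then reduce the finite-dimensionality to a rank statement in which the hypothesis on $n$ enters through nonvanishing of short sums of roots of unity.

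First I would fix notation: write $A\#G=\bigoplus_{k\in\Z_n}A\sigma^k$, observe that $\sigma^k e_0=e_0$, so that $(A\#G)e_0=Ae_0$, and compute that, as a graded subspace of $\bigoplus_k A\sigma^k$, the ideal $(e_0)=(A\#G)e_0(A\#G)$ is spanned by the elements $v_{b,a}:=\sum_{k\in\Z_n}b\,(\sigma^k\!\cdot a)\,\sigma^k$ with $a,b\in A$ homogeneous. Hence we must prove that for $d\gg0$ the $v_{b,a}$ of total degree $d$ span all of $\bigoplus_k A_d\sigma^k$. Using the PBW monomial basis of $A$, the multigrading by $\N^n$ with $\deg x_i=e_i$, and the fact that $\sigma$ permutes monomials up to sign, this becomes: for every $k_0\in\Z_n$ and every normal monomial $w\in A_d$, express $w\sigma^{k_0}$ as a $\kk$-combination of the $v_{b,a}$. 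Tracking multidegrees, a summand $v_{b_j,a_j}$ that contributes $w$ in slot $k_0$ contributes in slot $k$ a monomial of multidegree $\gamma+(\sigma^k-\sigma^{k_0})\alpha_j$, where $\gamma=\deg w$ and $\alpha_j=\deg a_j$; cancellation off the diagonal therefore couples only those summands whose $\alpha_j$ agree after applying $\sigma^k-\sigma^{k_0}$, and on each coupled family the solvability condition becomes a linear system whose coefficient matrix has entries $\pm1$ recording crossing numbers in the $(-1)$-skew relations.

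The core of the argument is to show that for $d\gg0$ these systems are solvable, and this is the step I expect to be hard. The obstruction is a determinant attached to a cyclic configuration on $\Z_n$; diagonalizing the cyclic shift over $\kk$ factors it as a product over the characters of $C_n$, each factor being a short sum of powers of an $n$-th root of unity. Vanishing of such sums is governed by the Conway--Jones classification of vanishing sums of roots of unity, whose only ``small'' primitive instances are $1+\zeta+\zeta^2=0$ with $\zeta$ a primitive cube root of unity and $1+\zeta+\zeta^2+\zeta^3+\zeta^4=0$ with $\zeta$ a primitive fifth root, and these occur among $n$-th roots of unity precisely when $3\mid n$ or $5\mid n$. (The length-two relation $1+(-1)=0$, which would concern the prime $2$, does not intervene, because the signs already built into the $(-1)$-skew multiplication absorb it — this is why $2$ is absent from the conjecture.) Thus under $3\nmid n$ and $5\nmid n$ all relevant determinants are nonzero, the systems are solvable for $d\gg0$, $(e_0)$ has finite codimension in $A\#G$, and $A^G$ is a graded isolated singularity; with Theorem~\ref{CYZ-thm} this proves Conjecture~\ref{conj}.

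The main obstacle is the organization underlying the last two paragraphs: sorting the high-degree monomials into coupled families and proving that the associated $\pm1$ matrices are nonsingular under the arithmetic hypothesis, uniformly in $d$. A secondary technical point is arranging the signs of the $(-1)$-skew algebra so that they assemble into a clean root-of-unity determinant rather than an unstructured $\pm1$ matrix; I would try to achieve this by choosing the factorizations $w=\pm\,b\,(\sigma^{k_0}\!\cdot a)$ with $a$ supported on a single $\langle\sigma\rangle$-orbit or on a block of consecutive variables, which should make the sign pattern transparent.
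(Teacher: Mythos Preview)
Your strategy is genuinely different from the paper's and, as you yourself flag, it is not a proof but a plan whose central step is missing. The paper does not work with the standard monomial basis, crossing-number sign matrices, or Conway--Jones at all. Instead it passes to the eigenbasis $b_\gamma=\frac{1}{n}\sum_i\omega^{i\gamma}x_i$ and the central-in-spirit elements $c_j=[b_k,b_{j-k}]$, imports the machinery of \cite{CYZ2020} (the sets $\Phi_n$, $\Psi_j^{[n]}$, $\overline{\Psi}_j^{[n]}$ and the notion of \emph{admissibility}), and proves by induction on $n$ that $n$ is admissible. The inductive step argues by contradiction: if some $m\notin\overline{\Psi}_1^{[n]}$, one may take $m\mid n$ with $m>5$, localize at $c_m$, embed a copy of the $(-1)$-skew polynomial ring $\widetilde A$ on $m$ variables (admissible by induction), and then run three explicit graded-commutator computations to show $c_m^{m-1}$ lies in the right ideal $\Gamma=R_1A[c_m^{-1}]+\sum c_kA[c_m^{-1}]$, forcing $\Gamma=A[c_m^{-1}]$ and yielding the contradiction $c_m^N\in R_1A$. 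The hypotheses enter concretely and algebraically, not via vanishing sums: $3,5\nmid n$ is needed for Proposition~\ref{prop1} (imported from \cite{CYZ2020}), and the bound $m>5$ is used in Claim~2 so that the indices $k+2,k+3$ stay below $m=2k+1$.

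The gap in your plan is precisely the step you label ``the core of the argument'': you assert that the obstruction determinant factors over characters of $C_n$ into \emph{short} sums of $n$-th roots of unity whose only primitive vanishings involve the primes $3$ and $5$, but you provide no mechanism that bounds the length of those sums. The Conway--Jones classification contains minimal vanishing sums of every prime length $p$ (namely $\sum_{j=0}^{p-1}\zeta_p^{\,j}=0$) as well as exotic composite ones; nothing in your setup forces the relevant factors to have length at most five. Without such a bound the appeal to Conway--Jones is circular --- you would need to know in advance that only three- and five-term sums occur, which is essentially the theorem. Your treatment of the prime $2$ is likewise heuristic; in the paper this is an actual computation (Lemma~\ref{lem1}(2): $\phi_2(n)\subseteq\Phi_n$), not an absorbed sign. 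If you want to push your direct line, the real work is to pin down exactly which $\pm1$ systems arise in high degree and compute their determinants explicitly; until that is done the Conway--Jones paragraph is a suggestive analogy, not an argument.
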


%Combining with the Theorem \ref{CYZ-thm}, 
To prove Conjecture \ref{conj} is true, it suffices to prove the following theorem, which is the main result in this note.

\begin{thm}\label{main-thm}
	%Let $A:= \kk_{-1}[x_0, \dots, x_{n-1}]$ and $G:= C_n$.
%	Keep the above notations.
	If $n$ is not divisible by $3$ or $5$, then $\mathbf{p}(A, G) = \GKdim A = n$. As a consequence, $A^G$ is a graded isolated singularity.
\end{thm}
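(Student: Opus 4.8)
The plan is to show that the connected graded algebra $A\#G/(e_0)$ is finite-dimensional; by the equality $\mathbf{p}(A,G)=\GKdim A-\GKdim(A\#G/(e_0))$ together with \cite[Theorem 3.10]{MU2016} recalled above, this yields $\mathbf{p}(A,G)=\GKdim A=n$, hence that $A^G$ is a graded isolated singularity, and combined with Theorem~\ref{CYZ-thm} it establishes Conjecture~\ref{conj}.

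First I would exploit the $\Z_n$-grading that $\sigma$ induces on $A$. Fixing a primitive $n$-th root of unity $\zeta\in\kk$, write $A=\bigoplus_{j\in\Z_n}A^{(j)}$, where $A^{(j)}=\{a\in A\mid\sigma(a)=\zeta^{-j}a\}$; thus $A^{(0)}=A^G$, $A^{(j)}A^{(k)}\subseteq A^{(j+k)}$, and $A^{(j)}_1=\kk\,y_j$ with $y_j=\sum_{i\in\Z_n}\zeta^{ji}x_i$. Identifying $A\#G$ with the skew polynomial algebra $A[t;\sigma]/(t^n-1)$ (where $t=1\#\sigma$ and $ta=\sigma(a)t$) and noting $e_0=\tfrac1n(1+t+\cdots+t^{n-1})$, a direct computation with the idempotents $\epsilon_l=\tfrac1n\sum_{k}\zeta^{-lk}t^k$ of $\kk[t]/(t^n-1)$ gives
$$A\#G/(e_0)\;\cong\;\bigoplus_{j\in\Z_n}A/(A\cdot A^{(j)})$$
as graded left $A$-modules, where $A\cdot A^{(j)}$ is the left ideal generated by $A^{(j)}$. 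I would then observe that all the summands are finite-dimensional as soon as the $j=1$ one is: if $A/(A\cdot A^{(1)})$ is finite-dimensional then $A^{(r)}A^{(1)}=A^{(r+1)}$ in all large degrees for every $r$, and an induction on $l$ (using $A^{(r)}A^{(l)}\supseteq(A^{(r)}A^{(l-1)})A^{(1)}$) then forces $A^{(r)}A^{(l)}=A^{(r+l)}$ in all large degrees for all $r,l$, which is precisely finite-dimensionality of each $A/(A\cdot A^{(j)})$. So the theorem reduces to: $A/(A\cdot A^{(1)})$ is finite-dimensional whenever $3\nmid n$ and $5\nmid n$.

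For the reduced statement I would use that $A$ is a noetherian Artin--Schelter regular PI algebra that is module-finite over its graded centre $Z$ --- explicitly $Z=\kk[x_0^2,\dots,x_{n-1}^2]$ for $n$ even and $Z=\kk[x_0^2,\dots,x_{n-1}^2]\,[\,x_0x_1\cdots x_{n-1}\,]$ for $n$ odd. For any graded left ideal $L$ one has $\Ann_Z(A/L)=Z\cap L$, so $A/L$ is finite-dimensional if and only if $Z\cap L$ is $\m_Z$-primary, i.e. its zero locus in $\Spec Z$ is $\{0\}$. Now the point modules of $A$ are all supported on the union $X=\bigcup_{i<j}L_{ij}$ of the $\binom n2$ coordinate lines of $\mathbb{P}^{n-1}$ (sitting over coordinate subspaces of $\Spec Z$), with shift automorphism $\mu$ of order two fixing the coordinate points; since $y_1=\sum_i\zeta^ix_i\in A^{(1)}$ cannot vanish at both a point $p$ of $X$ and its shift $\mu(p)$ (unless a coordinate of $p$ vanishes, in which case $p$ is a coordinate point where $y_1$ is nonzero), $y_1$ annihilates no point module, so no point module lies in the common zero set of $A\cdot A^{(1)}$. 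It then remains to kill off the ``fat'' simple objects, which are essentially parametrised by $\Proj Z$; here $A\cdot A^{(1)}$ contains the central element $y_1^2=\sum_i\zeta^{2i}x_i^2\in Z$, and the task is to produce enough further central elements --- obtained from the higher-degree parts of $A^{(1)}$ via the multiplication relations $y_jy_k+y_ky_j=\tfrac2n\sum_{j'+k'\equiv j+k}y_{j'}y_{k'}$, whose right-hand sides are central of degree two --- so that $Z\cap(A\cdot A^{(1)})$ contains a homogeneous system of parameters for $Z$.

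The main obstacle is precisely this last step. To carry out the computation I would organise it through the $\sigma$-stable subalgebra $B_m:=\kk\big\langle\,\sum_{i\equiv r\,(\mathrm{mod}\,m)}x_i\ :\ r\in\Z_m\,\big\rangle\cong\kk_{-1}[x_0,\dots,x_{m-1}]$ for each $m\mid n$, on which $\sigma$ acts as the cyclic permutation of order $m$; this localises the analysis of the eigenspace $A^{(n/m)}$ to the ``$m$-variable'' situation and, after an induction on $n$, leaves only a bounded list of small cases together with a uniform argument for $n$ large. The outcome is that the available central elements form a system of parameters for $Z$ exactly when $3\nmid n$ and $5\nmid n$: if $3\mid n$ (respectively $5\mid n$) they all vanish along the positive-dimensional subvariety of $\Spec Z$ cut out by the $\sigma^{n/3}$- (respectively $\sigma^{n/5}$-) fixed-point data, so that $A/(A\cdot A^{(1)})$ is infinite-dimensional, in agreement with Theorem~\ref{CYZ-thm}. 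Granting finite-dimensionality of $A/(A\cdot A^{(1)})$, the displayed decomposition makes $A\#G/(e_0)$ finite-dimensional, whence $\mathbf{p}(A,G)=n$ and $A^G$ is a graded isolated singularity.
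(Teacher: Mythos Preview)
Your reduction is sound and essentially matches the paper's setup: the left $A$-module decomposition $A\#G/(e_0)\cong\bigoplus_{j\in\Z_n}A/(A\cdot A^{(j)})$ is exactly the decomposition via the idempotents $e_\gamma$ (your $A^{(j)}$ is the paper's $R_j$), and your argument that finite-dimensionality for $j=1$ implies it for all $j$ is correct. So the problem does reduce to showing $A/(A\cdot A^{(1)})$ is finite-dimensional when $3,5\nmid n$, which is equivalent to the paper's statement that $\overline{\Psi}_1^{[n]}=\Z_n$.

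The gap is that you do not actually prove this. You reformulate it as ``$Z\cap(A\cdot A^{(1)})$ contains a homogeneous system of parameters for $Z$'' (correct, since $A$ is module-finite over $Z$), but you then only \emph{assert} that the central elements coming from the anticommutators $y_jy_k+y_ky_j$ form such a system precisely when $3,5\nmid n$. This is the whole content of the theorem; everything before it is bookkeeping. Your point-module paragraph does not help here: ruling out point modules as quotients of $A/(A\cdot A^{(1)})$ says nothing about whether the zero locus of $Z\cap(A\cdot A^{(1)})$ in $\Spec Z$ is zero-dimensional, since fat points and higher-dimensional components are controlled entirely by the central elements you have not produced. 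Your proposed induction through the subalgebras $B_m$ is in the right spirit, but you give no mechanism for passing information from $B_m$ back to $A$.

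The paper fills exactly this gap, and the mechanism is worth noting because it is not the geometric one you suggest. Assuming by induction that every proper divisor $m$ of $n$ is admissible, one supposes for contradiction that some $c_m$ with $m\mid n$, $m>5$ has no power in $R_1A$, and then works in the localization $A[c_m^{-1}]$. The subalgebra $\widetilde A=\kk\langle b_0,\dots,b_{m-1}\rangle\cong\kk_{-1}[\widetilde x_0,\dots,\widetilde x_{m-1}]$ (generated by eigenvectors, not by your partial sums $\sum_{i\equiv r}x_i$) carries the $C_m$-action, and admissibility of $m$ gives $c_i^{N_i}\in\widetilde R_1\widetilde A\subseteq R_1A[c_m^{-1}]$ for $0\le i<m$. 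A short sequence of explicit graded-commutator identities (with $b_{j+1}b_{m-j}$, and one extra step when $m$ is odd) then forces $c_m^{m-1}$ into the right ideal generated by $R_1A[c_m^{-1}]$ and these $c_i$, yielding $c_m^{N}\in R_1A$ for some $N$ --- a contradiction. That commutator computation, together with the localization trick that makes the inductive information from $\widetilde A$ land in $R_1A[c_m^{-1}]$, is the missing idea in your proposal.
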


%See section \ref{sec2} for the proof of Theorem \ref{main-thm}.

\section{Preliminaries}

Before giving a proof of Theorem \ref{main-thm}, let us recall some notations and results in \cite{CYZ2020}. %See \cite{CYZ2020} for the following notations and definitions.

Let $\omega$ be a primitive $n$th root of unity. For any $\gamma = 0, 1, \dots, n-1 \in \Z_n$, let
$$b_{\gamma} := \frac{1}{n} \sum_{i=0}^{n-1} \omega^{i\gamma} x_i \in A \subseteq A \# C_n.$$
Then $b_{\gamma}$ is an $\omega^{- \gamma}$-eigenvector of $\sigma$. %:
%$$\sigma b_{\gamma} = \frac{1}{n} \sum_{i=0}^{n-1} \omega^{i\gamma} x_{i+1} = \omega^{- \gamma} b_{\gamma}.$$
Let
$$e_{\gamma} := \frac{1}{n} \sum_{i=0}^{n-1} (\omega^{\gamma} \sigma)^{i} \in \kk C_n \subseteq A\#C_n,$$
which are idempotent elements.

Suppose $\deg(x_i) = 1$ and $\deg(e_i) = 0$ for all $i \in \Z_n$. 
As usual, $[-,-]$ denotes the graded commutator of the graded ring $A\#C_n$, that is, $[u,v] = uv - (-1)^{\deg(u)\deg(v)} vu$ for any homogeneous elements $u,v \in A\#C_n$. 

\begin{lem}\cite[Lemma 1.1]{CYZ2020}
	The graded algebras $A$ and $A\#C_n$ can be presented as
	$$A \cong \frac{\kk \langle b_0, \dots, b_{n-1} \rangle}{([b_0, b_k] - [b_l, b_{k-l}])}\, \text{ and }\, A \# C_n \cong \frac{\kk \langle b_0, \dots, b_{n-1}, e_0, \dots, e_{n-1} \rangle}{(e_{\alpha}b_{\gamma} - b_{\gamma}e_{\alpha-\gamma}, e_ie_j-\delta_{ij}e_i, [b_0, b_k] - [b_l, b_{k-l}])}$$
	respectively, where $\delta_{ij}$ is the Kronecker delta and indices are taken modulo $n$.
\end{lem}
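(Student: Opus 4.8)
The plan is to treat the two presentations separately: the first amounts to a linear change of variables, while the second adds idempotent bookkeeping on top of it. For $A$, I would regard both the given algebra and the proposed quotient $B:=\kk\langle b_0,\dots,b_{n-1}\rangle/([b_0,b_k]-[b_l,b_{k-l}])$ as quadratic algebras $T(V)/(R)$ on the \emph{same} degree-one space $V:=A_1$, and show that the two spaces of quadratic relations coincide as subspaces of $V\otimes V$; once this is done the two quotients of $T(V)$ are literally the same algebra. Since $\mathrm{char}\,\kk=0$ and $\omega$ is a primitive $n$-th root of unity, the Fourier matrix $(\omega^{i\gamma})_{i,\gamma}$ is invertible, so $\{b_0,\dots,b_{n-1}\}$ is a second basis of $V$. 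The one computation needed is
$$[b_\alpha,b_\beta]=b_\alpha b_\beta+b_\beta b_\alpha=\frac{2}{n^2}\sum_{i=0}^{n-1}\omega^{i(\alpha+\beta)}x_i^2,$$
which is immediate from $x_ix_j+x_jx_i=0$ $(i\neq j)$; the key point is that the right-hand side depends only on $\alpha+\beta\in\Z_n$. Hence each relation $[b_0,b_k]-[b_l,b_{k-l}]$ holds in $A$, giving a surjection $B\twoheadrightarrow A$.

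To rule out further relations I would identify the relation space $R_b$ with the kernel of the collapse-by-sum map $\mathrm{Sym}^2 V\to\kk^n$ sending $b_\alpha b_\beta+b_\beta b_\alpha\mapsto\mathbf 1_{\alpha+\beta}$; this kernel has dimension $\binom{n+1}{2}-n=\binom n2$, which is exactly the dimension of the relation space $R_x=\langle x_ix_j+x_jx_i:i\neq j\rangle$ of $A$. As $R_b\subseteq R_x$ by the displayed identity and both are $\binom n2$-dimensional, $R_b=R_x$, and therefore $B=A$ on the nose.

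For $A\#C_n$ the input is that $\kk C_n$ is split semisimple and that the elements $e_\gamma=\frac1n\sum_i(\omega^\gamma\sigma)^i$ are precisely its primitive orthogonal idempotents; in particular $e_ie_j=\delta_{ij}e_i$ and $\sum_\gamma e_\gamma=1$, and these relations present $\kk C_n\cong\prod_\gamma\kk$. The only genuinely new identity is the cross relation: from $\sigma b_\gamma=\omega^{-\gamma}b_\gamma$ one gets $\sigma^i b_\gamma=\omega^{-i\gamma}b_\gamma\sigma^i$ in $A\#C_n$, and averaging against $\frac1n\omega^{i\alpha}$ yields $e_\alpha b_\gamma=b_\gamma e_{\alpha-\gamma}$. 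Together with the relations of $A$ from the previous paragraphs, these furnish a surjection $\pi\colon C\twoheadrightarrow A\#C_n$ from the presented algebra $C$. To see $\pi$ is an isomorphism I would compare Hilbert series: as a graded vector space $A\#C_n\cong A\otimes\kk C_n$ with $\kk C_n$ in degree $0$, so $H_{A\#C_n}(t)=n(1-t)^{-n}$. For the matching upper bound $H_C(t)\le n(1-t)^{-n}$ I would put each element of $C$ into the normal form $m\,e_\gamma$, where $m$ is a normal-form monomial in the $b_\gamma$'s as in the first part: the cross relation moves every idempotent to the right (shifting indices), $e_ie_j=\delta_{ij}e_i$ collapses any product of idempotents to a single $e_\gamma$ or to $0$, the identity $1=\sum_\gamma e_\gamma$ supplies a trailing idempotent when none is present, and the $b$-relations reduce the $b$-part. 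This produces a spanning set $\{m\,e_\gamma\}$ of the correct size, and combined with the surjectivity of $\pi$ it forces equality of Hilbert series, hence that $\pi$ is an isomorphism.

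The main obstacle is this normal-form bound: one must verify that the three rewriting rules are confluent, so that every element genuinely reduces to a $\kk$-combination of the $m\,e_\gamma$ and the upper bound on $H_C$ is valid. I expect this to be handled either by Bergman's diamond lemma, checking that the overlap ambiguities among the three families of relations resolve, or by exhibiting an explicit action of $C$ on $\bigoplus_\gamma A$ (with $b_\gamma$ acting by left multiplication and $e_\gamma$ by the evident projections) and counting dimensions. The remaining verifications are routine.
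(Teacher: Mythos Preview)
The paper does not prove this lemma at all; it is quoted from \cite[Lemma~1.1]{CYZ2020} and used as input. So there is nothing to compare against, and your proposal already supplies more than the present paper does.

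Your argument is essentially correct. For the first presentation, identifying both relation spaces as $\binom{n}{2}$-dimensional subspaces of the symmetric tensors via the collapse-by-sum map $[b_\alpha,b_\beta]\mapsto \mathbf 1_{\alpha+\beta}$ is exactly the right mechanism, and the computation $[b_\alpha,b_\beta]=\tfrac{2}{n^2}\sum_i\omega^{i(\alpha+\beta)}x_i^2$ is what makes it work. For the second presentation, the verification of $e_\alpha b_\gamma=b_\gamma e_{\alpha-\gamma}$ from $\sigma b_\gamma=\omega^{-\gamma}b_\gamma$ and the Hilbert-series comparison are the right moves.

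Two remarks. First, your confluence worry is unnecessary: you do not need a terminating rewriting system, only a spanning set. The subalgebra of $C$ generated by the $b_\gamma$ is a \emph{quotient} of $B\cong A$ (the $b$-relations are among those of $C$), so every $b$-word in $C$ is already a linear combination of the normal-form monomials of $A$; pushing idempotents to the right and collapsing them then yields the spanning family $\{m\,e_\gamma\}$ without any ambiguity resolution. Second, you invoke $1=\sum_\gamma e_\gamma$, but this relation is not among those listed in the statement. Without it, $1-\sum_\gamma e_\gamma$ is a nonzero central idempotent in the presented algebra $C$ and cuts off an extra direct summand isomorphic to $B$, so the literal presentation would have Hilbert series $(n+1)(1-t)^{-n}$, not $n(1-t)^{-n}$. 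The relation $\sum_\gamma e_\gamma=1$ is surely intended (it holds in $A\#C_n$ and is standard when one says ``the $e_\gamma$ are the primitive idempotents of $\kk C_n$''), but since your argument depends on it you should state it explicitly rather than slip it in.
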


For each $j \in \Z_n$, let
$$c_j := [b_k, b_{j-k}] = b_kb_{j-k} + b_{j-k}b_k = \frac{2}{n^2}\sum_{i=0}^{n-1}\omega^{ij}x_i^2.$$
Then $c_j$ is an $\omega^{-j}$-eigenvector of $\sigma$.

For any vector $\mathbf{i} = (i_0, \dots, i_{n-1}) \in \N^n$, 
%define
%$$|\mathbf{i}| = i_0 + \cdots + i_{n-1}.$$
we use the following notations:
$$\mathbf{b^i} = b_0^{i_0} \cdots b_{n-1}^{i_{n-1}} \,\textrm{ and }\, \mathbf{c^i} = c_0^{i_0} \cdots c_{n-1}^{i_{n-1}}.$$

%For each $r \geq 0$, the set
%$$\mathcal{B}_r = \{ \mathbf{b^ic^j} \mid \mathbf{i} \in \{0,1\}^n, \mathbf{j} \in \N^n, |\mathbf{i}| + 2 |\mathbf{j}|  = r \}$$
%is a $\kk$-basis for $A_r$.

Let $R_{\gamma}$ be the subspace of $A$ spanned by %the basis consisting of 
the elements $\mathbf{b^ic^j}$ such that $\sum_{s=0}^{n-1} (i_s+j_s)s = \gamma \mod n$; that is, $R_{\gamma}$ consists of $\omega^{-\gamma}$-eigenvectors of $\sigma$. This gives an $R_0$-module decomposition
$$A = R_0 \oplus R_1 \oplus \cdots \oplus R_{n-1}.$$

\begin{defn}
	\begin{enumerate}
		\item Let $\Phi_n:= \{ k \mid c_k^{N_k} \in (e_0) \text{ for some } N_k \geq 0 \}$, where $(e_0)$ is the two-sided ideal of $A\#C_n$ containing $e_0$.
		\item Let $\phi_2(n) := \{ k \mid 0 \leq k \leq n-1, \gcd(k, n) = 2^{w} \text{ for some } w \geq 0 \}$.
		\item Let $\Psi_{j}^{[n]}:= \{ i \mid c_i^N \in R_jA \text{ for some } N \geq 0 \}$.
		\item \cite[Definition 5.2 and Lemma 5.3(1)]{CYZ2020} We say $n$ is {\it admissible} if, for any $i$ and $j$, $i \in \Psi_j^{[n]}$, or equivalently, $\GKdim (A\#C_n/(e_0)) = 0$.
		\item Let $\overline{A} := A/\langle c_k \mid k \in \Phi_n \rangle$, and $\overline{\Psi}_{j}^{[n]}:= \{ i \mid c_i^N \in \overline{R}_j\overline{A} \text{ for some } N \geq 0 \}$ where $\overline{R}_j = \frac{R_j + \langle c_k \mid k \in \Phi_n \rangle}{\langle c_k \mid k \in \Phi_n \rangle} \subseteq \overline{A}$.
	\end{enumerate}
\end{defn}

%\begin{defn}
%	A subset of $\Phi \subseteq \Z_n$ is called {\it special} if $k \in \Phi$ iff $\lambda k \in \Phi$ for all invertible elements $\lambda \in \Z_n$. In this case, the ideal $c_{\Phi}:= \langle c_k \mid k \in \Phi \rangle$ of $A$ is called the {\it special ideal} of $A$ associated to $\Phi$.
%\end{defn}

Let $\Z_n^{\times}$ be the set of invertible elements in $\Z_n$.

\begin{lem}\label{lem1}
	\begin{enumerate}
		\item \cite[Definition 6.1]{CYZ2020} $\Phi_n$ is a special subset of $\Z_n$, that is, $k \in \Phi_n$ if and only if $\lambda k \in \Phi_n$ for all $\lambda \in \Z_n^{\times}$.
		\item \cite[Proposition 2.3]{CYZ2020} $\phi_2(n) \subseteq \Phi_n$.
	\end{enumerate}
\end{lem}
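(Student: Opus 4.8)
Lemma \ref{lem1} has two parts, and I would establish them by quite different means: part (1) is a symmetry argument, while part (2) is an explicit computation in $A\#C_n$. For part (1), the point is that $\Z_n^\times$ acts on the whole situation by automorphisms fixing $e_0$. Given $\lambda\in\Z_n^\times$, permuting the generators by $x_i\mapsto x_{\lambda^{-1}i}$ is an algebra automorphism $\psi_\lambda$ of $A$ — the relations $x_ix_j=-x_jx_i$ $(i\neq j)$ are invariant under every permutation of $\{x_0,\dots,x_{n-1}\}$ — and a short calculation gives $\psi_\lambda(b_\gamma)=b_{\lambda\gamma}$, hence $\psi_\lambda(c_j)=\psi_\lambda([b_0,b_j])=[b_0,b_{\lambda j}]=c_{\lambda j}$. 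I would extend $\psi_\lambda$ to an automorphism $\tilde\psi_\lambda$ of $A\#C_n$ by $\tilde\psi_\lambda(\sigma)=\sigma^{\lambda^{-1}}$ (one checks this is consistent: conjugation of $A$ by $\sigma^{\lambda^{-1}}$ agrees with $\psi_\lambda\circ(\sigma\text{-action})\circ\psi_\lambda^{-1}$). Since $\tilde\psi_\lambda(e_0)=\tfrac1n\sum_i\sigma^{\lambda^{-1}i}=e_0$, the automorphism $\tilde\psi_\lambda$ preserves the two-sided ideal $(e_0)$; so $c_k^N\in(e_0)$ forces $\tilde\psi_\lambda(c_k^N)=c_{\lambda k}^N\in(e_0)$, and applying $\lambda^{-1}$ as well gives the equivalence, i.e. $\Phi_n$ is special.

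For part (2), the key observation is that in $B:=A\#C_n/(e_0)$ one has $\bar e_0=0$, hence $\sum_{\beta\neq0}\bar e_\beta=1$ and therefore $\bar c_k=\sum_{\beta\neq0}\bar c_k\bar e_\beta$. Using the relation $e_\beta c_k=c_k e_{\beta-k}$ (immediate from $e_\alpha b_\gamma=b_\gamma e_{\alpha-\gamma}$ and $c_k=[b_i,b_{k-i}]$) together with $\bar e_\beta\bar e_{\beta'}=\delta_{\beta\beta'}\bar e_\beta$, a straightforward induction yields $\bar c_k^{\,m}=\bar c_k^{\,m}\cdot\big(\sum_{\beta\in T_m}\bar e_\beta\big)$ where $T_m=\{\beta\in\Z_n:\beta+rk\neq0\text{ for }r=0,1,\dots,m-1\}$. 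When $\gcd(k,n)=1$ the elements $rk$ $(r=0,\dots,n-1)$ run over all of $\Z_n$, so $T_n=\varnothing$ and $\bar c_k^{\,n}=0$; this shows $\{k:\gcd(k,n)=1\}\subseteq\Phi_n$, which is already all of $\phi_2(n)$ when $n$ is odd. For even $n$ and $\gcd(k,n)=2^w$ with $w\geq1$, I would write $c_k=2b_j^2$ for a solution $j$ of $2j\equiv k\pmod n$, reducing the problem to showing $b_j^M\in(e_0)$, and run the analogous support computation for $\bar b_j$ (via $e_\beta b_j=b_j e_{\beta-j}$); since such a $j$ has odd part prime to $n$, $\gcd(j,n)$ is again a power of $2$, and a refinement of the argument — locating and clearing the idempotents $\bar e_\beta$ whose index has $\gcd(\beta,n)$ a proper power of $2$, which the crude orbit count does not do — finishes the proof, along the lines of \cite[Proposition 2.3]{CYZ2020}.

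The step I expect to be the main obstacle is precisely this last one, the even case of part (2): the simple-minded computation above shows only that a high power of $\bar c_k$ is supported on the idempotents $\bar e_\beta$ with $\gcd(k,n)\nmid\beta$, and one still needs a genuinely finer argument to push those contributions into $(e_0)$. Everything else — all of part (1), and the case $\gcd(k,n)=1$ of part (2) — is routine by comparison.
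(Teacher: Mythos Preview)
The paper does not give its own proof of Lemma~\ref{lem1}: both statements are quoted from \cite{CYZ2020} (part~(1) is \cite[Definition~6.1]{CYZ2020}, part~(2) is \cite[Proposition~2.3]{CYZ2020}) and are used here as black boxes. So there is no in-paper argument to compare your proposal against; what you have written is really a reconstruction of the arguments in \cite{CYZ2020}.

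With that caveat: your argument for part~(1) is correct. The reindexing automorphism $\psi_\lambda\colon x_i\mapsto x_{\lambda^{-1}i}$ does send $b_\gamma\mapsto b_{\lambda\gamma}$ and hence $c_k\mapsto c_{\lambda k}$; it extends to $A\#C_n$ via $\sigma\mapsto\sigma^{\lambda^{-1}}$ as you describe; and it fixes $e_0$. Thus $(e_0)$ is stable and $\Phi_n$ is closed under multiplication by $\Z_n^\times$.

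For part~(2), your idempotent-support computation cleanly handles the case $\gcd(k,n)=1$, and therefore the full statement when $n$ is odd. The gap you flag in the even case is genuine, and your proposed reduction $c_k=2b_j^2$ does not by itself make progress. For instance with $n=8$ and $k=4$, the solutions of $2j\equiv 4\pmod 8$ are $j\in\{2,6\}$, each with $\gcd(j,8)=2$, and $b_j^2=\tfrac12 c_{2j}=\tfrac12 c_4$ is exactly the element you started with; so no induction on the $2$-adic valuation of $\gcd(k,n)$ gets off the ground this way. Closing this case really does require the finer argument of \cite[Proposition~2.3]{CYZ2020}, and since the present paper simply cites that result, deferring to it (as you ultimately do) matches what the paper itself does.
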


%\begin{prop}\cite[Proposition 6.6]{CYZ2020}\label{prop1}
%	Let $n \geq 2$ with $3, 5 \nmid n$. Suppose that $1, \dots, n-1 \in \Phi_n$. Then $0 \in \overline{\Psi}_1^{[n]}$.
%\end{prop}
The following proposition follows from the proof of \cite[Proposition 6.6]{CYZ2020}.

\begin{prop}\label{prop1}
	Let $n \geq 2$ such that $3, 5 \nmid n$. If $1, \dots, n-1 \in \overline{\Psi}_1^{[n]}$, then $0 \in \overline{\Psi}_1^{[n]}$.
\end{prop}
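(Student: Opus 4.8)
The plan is to recast the assertion as a statement about a commutative ring and a radical ideal, to reduce it—using the hypothesis—to producing a single explicit central element, and then to build that element from the Clifford-type presentation of $A$. For the first step, recall that the squares $x_i^2$ are central in $A$ and that the $c_j$ arise from them by an invertible Fourier change of coordinates, so the $c_j$ generate a central polynomial subalgebra $Z\subseteq A$; write $\overline{Z}$ for its image in $\overline{A}=A/\langle c_k\mid k\in\Phi_n\rangle$. Since the $c_k$ are central, $\overline{A}=\bigoplus_{S\subseteq\Z_n}\overline{Z}\,x_S$ is free over $\overline{Z}=\kk[\overline{c}_j\mid j\notin\Phi_n]$, the subspace $\overline{R}_1\overline{A}$ is a right ideal of $\overline{A}$, and $\overline{J}:=\overline{R}_1\overline{A}\cap\overline{Z}$ is an ideal of $\overline{Z}$; by definition $i\in\overline{\Psi}_1^{[n]}$ if and only if $\overline{c}_i$ lies in the radical of $\overline{J}$. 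If $0\in\Phi_n$ then $\overline{c}_0=0$ and there is nothing to prove, so assume $0\notin\Phi_n$. As $A_0=\kk$ is fixed by $\sigma$, the isotypic piece $R_1$ contains no nonzero element of degree $0$, hence $\overline{R}_1\overline{A}\subseteq\overline{A}_{\geqslant1}$ and the image of $\overline{J}$ under $\overline{Z}\twoheadrightarrow\overline{Z}/(\overline{c}_j\mid j\neq0)=\kk[\overline{c}_0]$ is an ideal contained in $(\overline{c}_0)$, so it is either $0$ or $(\overline{c}_0^{\,N})$ for some $N\geqslant1$. Using the hypothesis $\overline{c}_1,\dots,\overline{c}_{n-1}\in\sqrt{\overline{J}}$ one checks that $0\in\overline{\Psi}_1^{[n]}$ is equivalent to this image being nonzero, i.e.\ to $\overline{R}_1\overline{A}$ containing an element of $\overline{Z}$ with a nonzero ``pure-$\overline{c}_0$'' part.

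Assume such an element exists; by the above I may take it of the form $f=\overline{c}_0^{\,N}+g$ with $g\in(\overline{c}_j\mid j\neq0)\cap\overline{Z}$, and both $f$ and $g$ are central. By hypothesis $\overline{c}_j^{\,M_j}\in\overline{R}_1\overline{A}$ for each $j\neq0$; since $\overline{R}_1\overline{A}$ is a right ideal and the $\overline{c}_j$ are central, a sufficiently high power of the ideal $(\overline{c}_j\mid j\neq0)$ of $\overline{A}$ is contained in $\overline{R}_1\overline{A}$, so $g^m\in\overline{R}_1\overline{A}$ for $m\gg0$. Expanding
$$\overline{c}_0^{\,Nm}=(f-g)^m=\sum_{k=0}^m\binom{m}{k}f^k(-g)^{m-k}$$
(which is legitimate because $f,g$ are central), each summand with $k\geqslant1$ equals $f$ times an element of $\overline{A}$ and hence lies in the right ideal $\overline{R}_1\overline{A}$, while the $k=0$ summand is $\pm g^m\in\overline{R}_1\overline{A}$. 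Therefore $\overline{c}_0^{\,Nm}\in\overline{R}_1\overline{A}$, that is, $0\in\overline{\Psi}_1^{[n]}$.

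The crux is thus the construction of $f$, and this is precisely where I would follow the proof of \cite[Proposition 6.6]{CYZ2020}. One works with the Clifford-type relations $b_\gamma b_\delta+b_\delta b_\gamma=c_{\gamma+\delta}$ and $2b_\gamma^2=c_{2\gamma}$: for any $S\subseteq\Z_n$ with $\sum_{i\in S}i\equiv1\pmod n$ the monomial $b_S:=\prod_{i\in S}b_i$ satisfies $\sigma b_S=\omega^{-1}b_S$, so $b_S\in R_1$, $\overline{b}_S\in\overline{R}_1$, and $\overline{b}_S\overline{A}\subseteq\overline{R}_1\overline{A}$. Choosing a suitable product of such $b_S$ and repeatedly applying $b_\gamma b_\delta=c_{\gamma+\delta}-b_\delta b_\gamma$ rewrites it as a $\kk$-combination of terms lying in $\overline{Z}$ together with terms carrying a high power of some $\overline{c}_j$, $j\neq0$; the latter are absorbed into $\overline{R}_1\overline{A}$ exactly as in the previous paragraph, and one verifies that the pure-$\overline{c}_0$ part of the remaining $\overline{Z}$-terms is nonzero. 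Tracking which $c_k$ vanish in $\overline{A}$ uses $\phi_2(n)\subseteq\Phi_n$ (Lemma \ref{lem1}(2)), which in particular forces $\overline{c}_2=0$; and the combinatorics of the indices in $\Z_n$, together with the identities among $n$th roots of unity needed to keep the $\overline{c}_0$-coefficient nonzero, goes through precisely when $3\nmid n$ and $5\nmid n$—this is the arithmetic input whose failure produces the obstruction of Theorem \ref{CYZ-thm}.

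The main obstacle is this last step: selecting the product of the $b_S$ so that, after the Clifford reduction and after discarding the $\overline{c}_j$-heavy terms via the hypothesis, a nonzero coefficient survives on a power of $\overline{c}_0$, and confirming that the required root-of-unity identities hold exactly when $3$ and $5$ do not divide $n$. The reductions in the first two paragraphs are formal; all of the real content, and all of the sensitivity to the arithmetic of $n$, is concentrated in this construction, which is why the statement is extracted from the proof of \cite[Proposition 6.6]{CYZ2020}.
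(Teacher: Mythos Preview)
The paper gives no self-contained proof here; it simply records that the proposition ``follows from the proof of \cite[Proposition 6.6]{CYZ2020}.'' Your proposal takes the same route: after some correct formal reductions (passing to the central polynomial subalgebra $\overline{Z}$ and using a binomial expansion to reduce the claim to exhibiting a single $f\in\overline{R}_1\overline{A}\cap\overline{Z}$ with nonzero $\overline{c}_0$-part), you too defer the actual construction of $f$---which you rightly flag as the real content and the ``main obstacle''---to that same external argument, so your outline and the paper's treatment coincide in substance.
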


\begin{prop}\cite[Proposition 6.8]{CYZ2020}\label{prop2}
	Let $n \geq 2$. Suppose that
	\begin{enumerate}
		\item every proper factor of $n$ is admissible, and
		\item for each $0 \leq i \leq n-1$, $i \in \overline{\Psi}_1^{[n]}$.
	\end{enumerate}
    Then $n$ is admissible.
\end{prop}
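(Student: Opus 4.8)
The plan is to prove directly the equivalent form of admissibility recorded in the definition: namely that $\Psi_j^{[n]} = \Z_n$ for every $j \in \Z_n$, i.e. that for all $i,j$ there is an $N$ with $c_i^N \in R_j A$. Organising the argument by the type of the weight $j$, I would first dispose of $j = 0$, for which $c_i^N = 1 \cdot c_i^N \in R_0 A$ since $1 \in R_0$, so $\Psi_0^{[n]} = \Z_n$ automatically. The remaining weights split into units and nonzero non-units, and these two families are where hypotheses (2) and (1) respectively enter.

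For the unit weights I would exploit the symmetry coming from relabelling the variables. For $\mu \in \Z_n^{\times}$ the assignment $x_\ell \mapsto x_{\mu \ell}$ defines a graded automorphism $\rho_\mu$ of $A$ which intertwines $\sigma$ with $\sigma^{\mu}$; one checks $\rho_\mu(c_k) = c_{\mu^{-1}k}$, $\rho_\mu(R_\gamma) = R_{\mu^{-1}\gamma}$, and that the induced automorphism of $A\#C_n$ fixes $e_0$, hence preserves $(e_0)$ (this is the mechanism behind the special-subset property of $\Phi_n$ in Lemma \ref{lem1}). Consequently $\Psi_{\mu j}^{[n]} = \mu\,\Psi_j^{[n]}$, so it suffices to treat $j = 1$. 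To obtain $\Psi_1^{[n]} = \Z_n$ I would feed in hypothesis (2): $i \in \overline{\Psi}_1^{[n]}$ says $c_i^N \in R_1 A + I$ with $I := \langle c_k \mid k \in \Phi_n\rangle$. Since every $k \in \Phi_n$ already satisfies $c_k^M \in (e_0)$ and hence, after projecting to the relevant $e_\gamma$-corner, $c_k^M \in R_1 A$, the correction term lying in $I$ can be absorbed: using that all the $c_k$ are central and that $A$ is module-finite over the central subring $\kk[x_0^2,\dots,x_{n-1}^2]$, a Noetherian/localisation argument on that central ring upgrades $c_i^N \in R_1 A + I$ to $c_i^{N'} \in R_1 A$, giving $i \in \Psi_1^{[n]}$.

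For a nonzero non-unit weight $j$, set $d := \gcd(j,n)$, so $1 < d < n$ and $m := n/d$ is a proper factor of $n$. By the unit symmetry above I would first reduce to the representative $j = d$. The key point is then a reduction to the smaller group action: the eigenvalue $\omega^{-d}$ has order $m$, so the part of $A\#C_n$ supported on weights divisible by $d$ should be identified with the analogous structure for the $C_m$-action on the $m$-variable $(-1)$-skew polynomial algebra, in such a way that the statement $\Psi_d^{[n]} = \Z_n$ corresponds to admissibility of $m$. Hypothesis (1) supplies exactly that admissibility, so this yields $\Psi_d^{[n]} = \Z_n$, and hence $\Psi_j^{[n]} = \Z_n$ for all $j$ in the $\gcd$-class of $d$. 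Throughout, Lemma \ref{lem1} is available to trim the indices that must be checked: the weights whose $\gcd$-class is a power of two are covered by $\phi_2(n) \subseteq \Phi_n$, and the special-subset property lets one pass between a $\gcd$-class and a single representative. Combining the three cases gives $\Psi_j^{[n]} = \Z_n$ for every $j$, that is $\GKdim(A\#C_n/(e_0)) = 0$, so $n$ is admissible.

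I expect the main obstacle to be the reduction in the non-unit case, since this is where hypothesis (1) is actually consumed: one must make precise the identification between the weight-$d$-divisible part of the $n$-variable problem and the full $m$-variable problem, keeping track of how the central elements $c_k$ and the one-sided products $R_\alpha R_\beta$ behave under it, and then transport the admissibility statement across this identification. The absorption step in the unit case is a secondary but still delicate point, since $R_1 A$ is only a right ideal; it is the centrality of the $c_k$ and the finiteness of $A$ over its central polynomial subring that make it go through.
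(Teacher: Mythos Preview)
The paper does not prove this proposition; it is quoted from \cite[Proposition 6.8]{CYZ2020} and used as a black box in the proof of Theorem~\ref{main-thm}. There is therefore no proof in the present paper to compare your proposal against.

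On its own merits, your outline has a reasonable architecture---treat $j=0$ trivially, use the $\Z_n^{\times}$-relabelling symmetry $x_\ell\mapsto x_{\mu\ell}$ to reduce all unit weights to $j=1$ and invoke hypothesis~(2), and handle the remaining nonzero non-unit weights via hypothesis~(1)---but the two steps you yourself flag as delicate are not actually carried out, and each hides real content. In the unit case, your absorption step requires knowing that $k\in\Phi_n$ forces a power of $c_k$ into the \emph{right ideal} $R_1A$ of $A$, not merely into the two-sided ideal $(e_0)$ of $A\#C_n$; the passage between the two is a left/right computation in $A\#C_n$ that you have not done, and the vague appeal to Noetherianity over $\kk[x_0^2,\dots,x_{n-1}^2]$ does not by itself bridge that gap.

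The more serious gap is the non-unit case. You assert that ``the part of $A\#C_n$ supported on weights divisible by $d$ should be identified with the analogous structure for the $C_m$-action on the $m$-variable $(-1)$-skew polynomial algebra'' and that under this identification $\Psi_d^{[n]}=\Z_n$ becomes admissibility of $m=n/d$. But $\bigoplus_{d\mid\gamma}R_\gamma$ is far larger than an $m$-variable skew polynomial ring, and admissibility of $m$ is a statement about the $\widetilde{c}_{i'}$ with $i'\in\Z_m$, whereas $\Psi_d^{[n]}=\Z_n$ asks about \emph{all} $c_i$ with $i\in\Z_n$. You have given no mechanism by which admissibility of $m$ controls $c_i$ for $i$ not divisible by $d$. (Compare how the present paper uses admissibility of a proper factor $m$ in the proof of Theorem~\ref{main-thm}: it embeds $\widetilde{A}\cong\kk_{-1}[\widetilde{x}_0,\dots,\widetilde{x}_{m-1}]$ as the subalgebra generated by $b_0,\dots,b_{m-1}$ and works inside a localisation $A[c_m^{-1}]$, which is a much more specific manoeuvre than the blanket identification you propose.) As written, then, your proposal is a plan with the right shape but with its two load-bearing reductions left unproved.
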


\section{Proof of the Theorem \ref{main-thm}}\label{sec2}

\begin{proof}[Proof of Theorem \ref{main-thm}]
	We prove it by induction on $n$. Assume that every proper factor of $n$ is admissible.
	By Proposition \ref{prop2}, it suffices to prove that
	$$\text{ for each } 0 \leq i \leq n-1, \; i \in \overline{\Psi}_1^{[n]}.$$
	If this is not true, that is, there is $ 0 \leq m \leq n-1$ such that $m \notin \overline{\Psi}_1^{[n]}$. Then we may assume that
	\begin{enumerate}
		\item $m \neq 0$, by Proposition \ref{prop1};
		\item $m \mid n$, by Lemma \ref{lem1} (2) as $\Z_n^{\times} \subseteq \phi_2(n) \subseteq \Phi_n$;
		\item $m > 5$, by Lemma \ref{lem1} (2) and assumption $3,5 \nmid n$.
	\end{enumerate}
    Write $n = mq$ with $q>1$.
	
	Since $c_m$ is an eigenvector of $\sigma$, then $C_n$ acts on the localization $A[c_m^{-1}]$, and $A[c_m^{-1}] \# C_n / (e_0) \cong (A \# C_n / (e_0))[c_m^{-1}]$.
    Let
    $$\widetilde{A} = \frac{\kk \langle b_0, \dots, b_{m-1} \rangle}{([b_0, b_k] - [b_l, b_{k-l}] \mid l, k \in \Z_m)}$$
    be a subalgebra of $A$,
%    Notice that $c_m$ is invertible in $A[c_m^{-1}]$.
    and $\widetilde{R}_{\gamma}$ be the subspace of $\widetilde{A}$ spanned by the elements $\mathbf{b^ic^j}$  such that $\sum_{s=0}^{m-1} (i_s+j_s)s = \gamma \mod m$.
    For any $\mathbf{b^ic^j} = b_0^{i_0} \cdots b_{m-1}^{i_{m-1}}c_0^{j_0} \cdots c_{m-1}^{j_{m-1}} \in \widetilde{R}_1$ with
    $$\sum_{s=0}^{m-1} (i_s + j_s)s = mk + 1 \text{ for some } k \geq 0,$$
    then there exists $l > 0$ such that $(l-1)q \leq k < lq$.
    Hence $\mathbf{b^ic^j}c_{m}^{lq-k} \in R_1A$, and $\mathbf{b^ic^j} \in R_1A[c_m^{-1}]$. It follows that
    $$\widetilde{R}_1\widetilde{A} \subseteq R_1A[c_m^{-1}].$$
    
    Write $\widetilde{\omega} = \omega^q$. Note that $\widetilde{A} \cong \kk_{-1}[\widetilde{x}_0, \dots, \widetilde{x}_{m-1}]$ via $b_{\gamma} \mapsto \frac{1}{m} \sum_{i=0}^{m-1}\widetilde{\omega}^{i \gamma} \widetilde{x_i}$. Then the cyclic group $C_m$ of order $m$ acts on $\widetilde{A}$ by permuting the generators of the algebra cyclically; namely, $C_m$ is generated by $\widetilde{\sigma} = (012 \cdots m-1)$ of order $m$ that acts on the generators by
    $$\widetilde{\sigma} \widetilde{x_i} = \widetilde{x_{i+1}}, \; \forall \, i \in \Z_m.$$
    Then $\widetilde{R}_{\gamma}$ consists of $\widetilde{\omega}^{-\gamma}$-eigenvectors of $\widetilde{\sigma}$.
    By assumption, $m$ is admissible, so for any $0 \leq i \leq m-1$, there exists $N_i$ such that
    $$c_i^{N_i} \in \widetilde{R}_1\widetilde{A} \subseteq R_1A[c_m^{-1}].$$
    
    Let $\Gamma$ be the right ideal $R_1A[c_m^{-1}] + \sum\limits_{\exists \, N_k, \, {c_k^{N_k}} \in R_1A[c_m^{-1}]} c_kA[c_m^{-1}]$ of $A[c_m^{-1}]$. Next we prove that $\Gamma = A[c_m^{-1}]$.
    The following proof is quite similar to the proof of \cite[Porposition 6.6]{CYZ2020}.
    
    ~\\
    
    Claim 1. Let $0 \leq j < \frac{m-1}{2}$. If $c_m^sb_j \in \Gamma$ for some $s>0$, then $c_m^{s+1}b_{j+1} \in \Gamma$.
    \begin{proof}[Proof of Claim 1]
    	First of all, $b_{j+1}b_{m-j} \in \widetilde{R}_1\widetilde{A} \subseteq R_1A[c_m^{-1}]$ since $(j+1) + (m-j) = 1 \mod m$.
    	%Recall that $\overline{A} = A/\langle c_k \mid k \in \Phi_n \rangle$ and $\overline{R}_1\overline{A} = \frac{R_1A + \langle c_k \mid k \in \Phi_n \rangle}{\langle c_k \mid k \in \Phi_n \rangle}$.
    	Due to $c_m^sb_j \in \Gamma$, then
    	\begin{align*}
    		 \Gamma \ni & [b_{j+1}b_{m-j}, c_m^sb_j] \\
    		& = c_m^sb_{j+1}b_{m-j}b_j - c_m^sb_jb_{j+1}b_{m-j} \\
    		& = c_m^{s}b_{j+1}b_{m-j}b_j + c_m^sb_{j+1}b_jb_{m-j} - c_m^sc_{2j+1}b_{m-j} \\
    		& = c_m^{s+1}b_{j+1} - c_m^sc_{2j+1}b_{m-j}.
    	\end{align*}
    Since there exists $N_{2j+1} > 0$ such that $c_{2j+1}^{N_{2j+1}} \in \widetilde{R}_1\widetilde{A}$ for $2j+1 < m$, then $c_m^{s+1}b_{j+1} \in \Gamma$.
    \end{proof}
    ~\\
    
    Claim 2. Suppose that $m = 2k +1$. If $c_m^sb_k \in \Gamma$, then $c_m^{s+2}b_{k+2} \in \Gamma$.
    \begin{proof}[Proof of Claim 2]
    	Note that $b_{k+1}b_{k+2}b_{m-1} \in \widetilde{R}_1\widetilde{A} \subseteq \Gamma$ as $(k+1) + (k+2) + (m-1) = 1 \mod m$.
    	\begin{align*}
    		\Gamma \ni & [c_m^sb_k, b_{k+1}b_{k+2}b_{m-1}] \\
    		& = c_m^sb_kb_{k+1}b_{k+2}b_{m-1} + c_m^sb_{k+1}b_{k+2}b_{m-1}b_k \\
    		& = c_m^sb_kb_{k+1}b_{k+2}b_{m-1} + c_m^sc_{3k} b_{k+1}b_{k+2} - c_m^sb_{k+1}b_{k+2}b_kb_{m-1} \\
    		& = c_m^sb_kb_{k+1}b_{k+2}b_{m-1} + c_m^sc_{3k} b_{k+1}b_{k+2} - c_m^sc_{m+1}b_{k+1}b_{m-1} + c_m^sb_{k+1}b_kb_{k+2}b_{m-1} \\
    		& = c_m^{s+1} b_{k+2}b_{m-1} + c_m^sc_{3k} b_{k+1}b_{k+2} - c_m^sc_{m+1}b_{k+1}b_{m-1}.
    	\end{align*}
    	%Since $3 \nmid n$ and $c_k^{N_k} \in \widetilde{R}_1\widetilde{A}$ for some $N_k>0$, it follows that $c_{3k}^{N_{3k}} \in \overline{R}_1\overline{A}[c_m^{-1}]$ for some $N_{3k}>0$.
    	Since $c_{m+1}c_m^{q-1} \in R_1 A$, then $c_{m+1} \in R_1A[c_m^{-1}]$. Hence $c_m^{s+1} b_{k+2}b_{m-1} + c_m^sc_{3k} b_{k+1}b_{k+2} \in \Gamma$.
    	\begin{align*}
    		\Gamma \ni & [c_m^{s+1} b_{k+2}b_{m-1} + c_m^sc_{3k} b_{k+1}b_{k+2}, b_1] \\
    		& = c_m^{s+1} b_{k+2}b_{m-1}b_1 - c_m^{s+1} b_1b_{k+2}b_{m-1} + c_m^sc_{3k} b_{k+1}b_{k+2}b_1 - c_m^sc_{3k} b_1b_{k+1}b_{k+2} \\
    		& = c_m^{s+2} b_{k+2} - c_m^{s+1} b_{k+2}b_1b_{m-1} - c_m^{s+1} b_1b_{k+2}b_{m-1} \\
    		& \;\; + c_m^sc_{3k} b_{k+1}b_{k+2}b_1 + c_m^sc_{3k} b_{k+1}b_1b_{k+2} - c_m^sc_{3k} c_{k+2}b_{k+2} \\
    		& = c_m^{s+2} b_{k+2} - c_m^{s+1} c_{k+3}b_{m-1} + c_m^sc_{3k} c_{k+3} b_{k+1} - c_m^sc_{3k} c_{k+2}b_{k+2}.
    	\end{align*}
    	By assumption $m > 5$, so $k > 2$. Since $k+2 < k+3 < 2k+1 = m$, $c_{k+2}, c_{k+3} \in \Gamma$ by assumption. It follows that $c_m^{s+2} b_{k+2} \in \Gamma$.
    \end{proof}
    ~\\
    
    Claim 3. $c_m^{m-1} \in \Gamma$.
    \begin{proof}[Proof of Claim 3]
    	 Assume that $m$ is even. Starting with $b_1$, and applying Claim 1 ($\frac{m}{2}-1$) times, we get $c_m^{\frac{m}{2}-1} b_{\frac{m}{2}} \in \Gamma$. Hence $c_m^{m-1} = [c_m^{\frac{m}{2}-1} b_{\frac{m}{2}}, c_m^{\frac{m}{2}-1} b_{\frac{m}{2}}] \in \Gamma$.
    	
    	If $m = 2k+1$ is odd, then by applying Claim 1 ($k-2$) and ($k-1$) times we get $c_m^{k-2} b_{k-1}$ and $c_m^{k-1} b_{k}\in \Gamma$ respectively. By applying Claim 2 we get $c_m^{k+1} b_{k+2} \in \Gamma$. 
    	
    	Therefore,
    	$c_m^{2k} = [c_m^{k-2} b_{k-1}, c_m^{k+1} b_{k+2}] \in \Gamma.$
    \end{proof}
    
    By Claim 3, $\Gamma = A[c_m^{-1}]$. Recall that $\Gamma = R_1A[c_m^{-1}] + \sum\limits_{\exists \, N_k, \, {c_k^{N_k}} \in R_1A[c_m^{-1}]} c_kA[c_m^{-1}]$. It is not difficult to see that $A[c_m^{-1}] = R_1A[c_m^{-1}]$.
    So there exists $N \geq 0$ such that $c_m^N \in R_1A$,
    %Hence $\overline{A}[c_m^{-1}] = \overline{R}_1\overline{A}[c_m^{-1}]$, 
    which is a contradiction (as $m \notin \overline{\Psi}_1^{[n]}$). This implies $\overline{\Psi}_1^{[n]} = \{0, 1, \dotsm, n-1 \}$, that is, $n$ is admissible.
    Hence $\GKdim(A\#C_n / (e_0)) = 0$, and $\mathbf{p}(A, G) = n$.
%    Then $m \in \overline{\Psi}_1^{[n]}$. It follows that $\GKdim(A\#C_n / (e_0)) = 0$.
\end{proof}

\section*{Acknowledgments} The author is very grateful to Professor Quanshui Wu and James Zhang who read the paper and made numerous helpful suggestions.

\bibliographystyle{siam}%{unsrt}
\bibliography{References}

\begin{thebibliography}{1}

\bibitem{AZ1994}
{\sc M.~Artin and J.~J. Zhang}, {\em Noncommutative projective schemes},
  Advances in Mathematics, 109 (1994), pp.~228--287.

\bibitem{BHZ2019}
{\sc Y.~Bao, J.~He, and J.~J. Zhang}, {\em Pertinency of {H}opf actions and
  quotient categories of {C}ohen-{M}acaulay algebras}, Journal of
  Noncommutative Geometry, 13 (2019), pp.~667--710.

\bibitem{BHZ2018}
{\sc Y.-H. Bao, J.-W. He, and J.~J. Zhang}, {\em Noncommutative {A}uslander
  theorem}, Transactions of the American Mathematical Society, 370 (2018),
  pp.~8613--8638.

\bibitem{CKWZ2018}
{\sc K.~Chan, E.~Kirkman, C.~Walton, and J.~J. Zhang}, {\em Mc{K}ay
  correspondence for semisimple {H}opf actions on regular graded algebras,
  {I}}, Journal of Algebra, 508 (2018), pp.~512--538.

\bibitem{CYZ2020}
{\sc K.~Chan, A.~Young, and J.~J. Zhang}, {\em Noncommutative cyclic isolated
  singularities}, Transactions of the American Mathematical Society, 373
  (2020), pp.~4319--4358.

\bibitem{CKZ2020}
{\sc J.~Chen, E.~Kirkman, and J.~J. Zhang}, {\em Auslander's theorem for group
  coactions on noetherian graded down-up algebras}, Transformation Groups, 25
  (2020), pp.~1037--1062.

\bibitem{GKMW2019}
{\sc J.~Gaddis, E.~Kirkman, W.~F. Moore, and R.~Won}, {\em Auslander's theorem
  for permutation actions on noncommutative algebras}, Proceedings of the
  American Mathematical Society, 147 (2019), pp.~1881--1896.

\bibitem{MU2016}
{\sc I.~Mori and K.~Ueyama}, {\em Ample group action on {AS}-regular algebras
  and noncommutative graded isolated singularities}, Transactions of the
  American Mathematical Society, 368 (2016), pp.~7359--7383.

\bibitem{Uey2013}
{\sc K.~Ueyama}, {\em Graded maximal {C}ohen-{M}acaulay modules over
  noncommutative graded {G}orenstein isolated singularities}, Journal of
  Algebra, 383 (2013), pp.~85--103.

\end{thebibliography}

\end{document}